\title{Randi\'c Incidence Energy of Graphs\footnote{Supported by NSFC and
PCSIRT.} }
\author{\small{Ran Gu, Fei Huang, Xueliang Li }\\
{\small  Center for Combinatorics and LPMC-TJKLC}\\
{\small Nankai University, Tianjin 300071, P.R. China}\\
{\small Email: guran323@163.com, huangfei06@126.com, lxl@nankai.edu.cn,}}
\date{}
\begin{document}

\makeatletter
  \newcommand\figcaption{\def\@captype{figure}\caption}
  \newcommand\tabcaption{\def\@captype{table}\caption}
\makeatother
\newtheorem{pro}{Proposition}[section]
\newtheorem{defn}{Definition}[section]
\newtheorem{theorem}{Theorem}[section]
\newtheorem{lemma}[theorem]{Lemma}
\newtheorem{coro}[theorem]{Corollary}
\newtheorem{theo}[theorem]{Theorem}
\newenvironment{proof}{\noindent {\bf
Proof.}}{\rule{3mm}{3mm}\par\medskip}

\maketitle

\begin{abstract}
Let $G$ be a simple graph with vertex set $V(G) = \{v_1, v_2,\ldots
, v_n\}$ and edge set $E(G) = \{e_1, e_2,\ldots , e_m\}$. Similar to
the Randi\'c matrix, here we introduce the Randi\'c incidence matrix
of a graph $G$, denoted by $I_R(G)$, which is defined as the
$n\times m$ matrix whose $(i, j)$-entry is $(d_i)^{-\frac{1}{2}}$ if
$v_i$ is incident to $e_j$ and $0$ otherwise.  Naturally, the
Randi\'c incidence energy $I_RE$ of $G$ is the sum of the singular
values of $I_R(G)$. We establish lower and upper bounds for the
Randi\'c incidence energy. Graphs for which these bounds are best
possible are characterized. Moreover, we investigate the relation
between the Randi\'c incidence energy of a graph and that of its
subgraphs. Also we give a sharp upper bound for the Randi\'c
incidence energy of a bipartite graph and determine the trees with
the maximum Randi\'c incidence energy among all $n$-vertex trees. As
a result, some results are very different from those for incidence
energy.
\\[2mm]

\noindent{\bf Keywords:} Randi\'c incidence matrix, Randi\'c incidence energy, eigenvalues.

\noindent{\bf AMS subject classification 2010:} 05C50, 15A18, 92E10

\end{abstract}

\section{Introduction}

In this paper we are concerned with simple finite graphs. Undefined
notation and terminology can be found in \cite{Bondy}. Let $G$ be a
simple graph with vertex set $V(G) = \{v_1, v_2,\ldots, v_n\}$ and
edge set $E(G) = \{e_1, e_2,\ldots, e_m\}$, and let $d_i$ be the
degree of vertex $v_i$, $i = 1, 2, \cdots, n$.

For $S\subseteq V(G)$, $G[S]$ is used to denote the subgraph of $G$
induced by $S$. For a subset $E'$ of $E(G)$, the subgraph of $G$
obtained by deleting the edges of $E' $ is denoted by $G - E'$.  If
$E'$ consists of only one edge $e$, then $G - E'$  will be written
as $G- e$.

The Randi\'c index \cite{Randic} of $G$ is defined as the sum of
$\frac{1}{\sqrt{d_id_j}}$ over all edges $v_iv_j$ of $G$. Let $A(G)$
be the $(0,1)$-adjacency matrix of $G$ and $D(G)$ be the diagonal
matrix of vertex degrees. The Randi\'c matrix \cite{Gutman} $R=R(G)$
of order $n$ can be viewed as a weighted adjacency matrix, whose
$(i,j)$-entry is defined as
\begin{equation*}
R_{i,j}=
\left\{
  \begin{array}{ll}
   0 & \hbox{ if $i$ = $j$,} \\
    (d_id_j)^{-\frac{1}{2}} & \hbox{ if the vertices $v_i$ and $v_j$ of $G$ are adjacent}, \\
    0 & \hbox{ if the vertices $v_i$ and $v_j$ of $G$ are not adjacent.}
  \end{array}
\right.
\end{equation*}
Denote the eigenvalues of the Randi\'c matrix $R=R(G)$ by
$\rho_1,\rho_2,\ldots ,\rho_n$ and label them in non-increasing
order. The greatest Randi\'{c} eigenvalue has been studied in
\cite{Gutman}, that is, $\rho_1=1$ if $G$ possesses at least one
edge. Note that in \cite{GHL} we introduced the concepts of general
Randi\'c matrix and general Randi\'c energy and deduced some results
about them.

The signless Laplacian matrix \cite{DC} of $G$ is $Q(G) =
D(G)+A(G)$. This matrix has nonnegative eigenvalues $q_1 \geq  q_2
\geq \ldots \geq q_n \geq 0$.  If the graph $G$ does not possess
isolated vertices, the normalized signless Laplacian matrix
\cite{FRK} can be defined as
$\mathcal{Q}(G)=D(G)^{-1/2}Q(G)D(G)^{-1/2}$. Let $\mu_1^+, \mu_2^+,
\ldots ,\mu_n^+$ be  eigenvalues of $\mathcal{Q}(G)$  with
$\mu_1^+\geq \mu_2^+\geq  \ldots\geq\mu_n^+$. Then, evidently,
$$\mathcal{Q}(G)=I_n+R(G).$$
Here and later $I_n$ is denoted the unit matrix of order $n$. So
$\mu_i^+=1+\rho_i$ for $i=1,2,\ldots,n.$ Therefore, $\mu_1^+=2$.

The incidence matrix $I(G)$ of $G$ is the $n\times m$ matrix whose
$(i, j)$-entry is $1$ if  $v_i$ is incident to $e_j$ and $0$
otherwise.

The notion of the energy of a graph $G$ was introduced by Gutman
\cite{G} in 1978 as the sum of the absolute values of the
eigenvalues of $A(G)$. Its origin was from chemistry, where it is
connected with the total $\pi$-electron energy of a molecule
\cite{GP}. Research on graph energy is nowadays very active, various
properties of graph energy may be found in \cite{LSG}. The concept
of graph energy was extended to any matrix by Nikiforov \cite{VN} in
the following manner. Recall that  singular values of a real (not
necessarily square) matrix $M$ are the square roots of the
eigenvalues of the (square) matrix $MM^T$ or $M^TM$ and that these
matrices have the same nonzero eigenvalues. The energy $E(M)$ of the
matrix $M$ is then defined \cite{VN} as the sum of its singular
values.

Motivated by Nikiforov's idea, the incidence energy $IE(G)$ of a
graph $G$ was defined \cite{MJ} as the sum of the singular values of
the incidence matrix $I(G)$, that, in turn, are equal to the square
roots of the eigenvalues of $I(G)I(G)^T$.

We use $Line(G)$ to denote the line graph of $G$. It is well-known
\cite{VN} that for a graph $G$, $$I(G)I(G)^T = D(G)+A(G)=Q(G),$$ and
$$I(G)^TI(G)=2 I_m + A(Line(G)).$$
Some basic properties of incidence energy were established in
\cite{Gutman1,MJ}. Many lower and upper bounds on this quantity were
found; for details see \cite{BG,DG,GKMZ}.

Similar to the Randi\'c matrix, in this paper we define an $n\times
m$ matrix whose $(i, j)$-entry is $(d_i)^{-\frac{1}{2}}$ if $v_i$ is
incident to $e_j$ and $0$ otherwise, and call it the {\it Randi\'c
incidence matrix} of $G$ and denote it by $I_R(G)$. Obviously,
$I_R(G)=D^{-\frac 1 2}I(G)$.

Let $U$ be the set of isolated vertices of $G$, and $W=V-U$. Set
$r=|W|$ $(r\leq n)$. From the definition, we can easily get that
\begin{equation}\label{0}
I_R(G)I_R(G)^T =\left(
                 \begin{array}{cc}
                  I_{r}+ R(G[W]) & 0 \\
                   0 & 0\\
                 \end{array}
               \right)
               =\left(
                 \begin{array}{cc}
                   \mathcal{Q}(G[W]) & 0 \\
                   0 & 0\\
                 \end{array}
               \right).
\end{equation}
We can consider the Randi\'c incidence matrix as a weighted
incidence matrix. It is a natural generalization of the incidence
matrix. Let $\sigma_1(G), \sigma_2(G),\ldots,\sigma_n(G)$ be the
singular values of the Randi\'c incidence matrix of a graph $G$. Now
we define $I_RE(G) :=\sum_{ i=1}^n \sigma_i(G)$, which is called the
{\it Randi\'c incidence energy} of $G$.

Also if the graph $G$ has components  $G_1,\ldots , G_k$, such that
each of those is not an isolated vertex,  then $I_RE(G) =\sum_{
i=1}^kI_RE(G_i)$. From (\ref{0}), we know that
\begin{equation}\label{2}
I_RE(G) =\sum_{i=1}^{r} \sqrt{\mu_i^+(G[W])},
\end{equation}
and \begin{equation}\label{3'} \sum_{i=1}^n
\sigma_i^2(G)=tr(I_R(G)I_R(G)^T)=r.
\end{equation}
In particular, if $G$ has no isolated vertices,
we have that

\begin{equation} \label{1}
I_R(G)I_R(G)^T=\mathcal{Q}(G),
\end{equation}
and \begin{equation}\label{3}
\sum_{i=1}^n \sigma_i^2(G)=n.
\end{equation}

On the other hand, let us consider the $m\times m$ matrix
$I_R(G)^TI_R(G)$. It is easy to see that its $(i,j)$-entry is as
follows.
\begin{equation*}
[I_R(G)^TI_R(G)]_{i,j}=
\left\{
  \begin{array}{ll}
  \frac{1}{d_k} +\frac{1}{d_{\ell}} & \hbox{ if $i = j$, $e_i=\{v_k,v_{\ell}\}$,} \\
    \frac{1}{d_k} & \hbox{ if  $i\neq j$, $e_i$ and $e_j$ have a common vertex $v_k$ in $G$,} \\
    0 & \hbox{ if  $i\neq j$, $e_i$ and $e_j$ do not have a common vertex in $G$.}
  \end{array}
\right.
\end{equation*}
Although it looks quite different from $I^T(G)I(G)$, it has many
similar properties. For examples, the sum of each column,
respectively, each row, of the matrix $I_R(G)^TI_R(G)$ is 2, and
$tr(I_R(G)^TI_R(G))=n$. Therefore, $2$ is its an eigenvalue with
eigenvector $(1,1,\ldots, 1)^T$. Particularly, if $G$ is a
$d$-regular graph, then we have
$$I_R(G)^TI_R(G)=\frac{2}{d}I_m+\frac{1}{d}A(Line(G)).$$  Let
$\lambda_1(Line(G)),\cdots, \lambda_m(Line(G))$ denote the adjacency
eigenvalues of $Line(G)$, clearly, $$I_RE(G)=\sum_{i=1}^n
\sigma_i(G)=\sum\limits_{i = 1}^m
{\sqrt{\frac{2}{d}+\frac{1}{d}{\lambda _i}(Line(G))}}.$$

Now we give an example to calculate the Randi\'c incidence matrix
and Randi\'c incidence energy of a special class of graphs.

{\bf Example:} Consider the star $S_n$ with vertex  set
$V=\{v_1,v_2,\ldots,v_n\}$ and edge set
$E=\{e_1,e_2,\ldots,e_{n-1}\}$ where $e_i=v_iv_n$ for $i=1,2,\ldots,
n-1.$ Then  $$I_R(S_n)=\left(\begin{array}{c}
                  I_{n-1} \\
                  \alpha
                \end{array}\right),
$$ where $\alpha=(\frac{1}{\sqrt{n-1}},\frac{1}{\sqrt{n-1}},\ldots,
\frac{1}{\sqrt{n-1}}).$

$$I_R(S_n)I_R(S_n)^T=\left(
  \begin{array}{cc}
    I_{n-1} & \alpha^T \\
     \alpha & 1 \\
  \end{array}
\right)
=I_n+\left(
  \begin{array}{cc}
    O_{n-1} & \alpha^T \\
     \alpha & 0 \\
  \end{array}
\right)
=I_n+R(S_n).
$$
By calculations, we have $\mu_1^+(S_n)=2$,
$\mu_2^+(S_n)=\ldots=\mu_{n-1}^+(S_n)=1$ and $\mu_n^+(S_n)=0,$ and
then $I_RE(S_n)=n-2+\sqrt{2}.$

In this paper, we establish some lower and upper bounds for the
Randi\'c incidence energy of a graph. Graphs for which these bounds
are best possible are characterized.  Moreover, we investigate the
relation between the Randi\'c incidence energy of a graph and that
of its subgraphs. That property is analogues to incidence energy.
Also we give a sharp upper bound for the Randi\'c incidence energy
of a bipartite graph and describe the trees which have the maximum
Randi\'c incidence energy among all $n$-vertex trees. It is
interesting that the extremal trees attain the maximum Randi\'c
incidence energy are quite different from the trees which have the
maximum incidence energy.

\section{Upper and lower bounds}

\begin{theo}
Let $G$ be a graph of order $n$, and contains no isolated vertices.
Then

\begin{equation} \label{4}
I_RE(G)\geq\sqrt{n},
\end{equation}
the equality holds if and only if $G\cong K_2.$

\end{theo}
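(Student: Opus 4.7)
My plan is to derive the bound by squaring $I_RE(G)$ and comparing it to the trace identity (\ref{3}), which already gives $\sum_{i=1}^n \sigma_i^2(G) = n$ when $G$ has no isolated vertices. Since every $\sigma_i(G) \geq 0$, the elementary inequality
$$\Bigl(\sum_{i=1}^n \sigma_i\Bigr)^2 = \sum_{i=1}^n \sigma_i^2 + 2\sum_{i<j}\sigma_i\sigma_j \;\geq\; \sum_{i=1}^n \sigma_i^2$$
combined with (\ref{3}) immediately yields $I_RE(G)^2 \geq n$, hence $I_RE(G) \geq \sqrt{n}$. This is the content of (\ref{4}).

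The substantive part of the argument is characterizing the equality case. From the display above, equality forces $\sum_{i<j}\sigma_i\sigma_j = 0$, which, since the singular values are nonnegative, means that at most one $\sigma_i(G)$ can be positive. I would then use the fact established in the introduction that the normalized signless Laplacian $\mathcal{Q}(G)$ always has largest eigenvalue $\mu_1^+ = 2$, so in view of (\ref{2}) we have $\sigma_1(G) = \sqrt{2} > 0$. Therefore equality forces $\sigma_1(G) = \sqrt{2}$ and $\sigma_i(G) = 0$ for all $i \geq 2$. Substituting back into (\ref{3}) gives $n = \sigma_1^2 = 2$.

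Finally I would conclude that on two vertices the only graph without isolated vertices is $K_2$, and a direct check (already essentially given in the paper's example with $S_n$ for $n=2$) confirms $I_RE(K_2) = \sqrt{2} = \sqrt{n}$, so equality indeed holds for $K_2$. The main obstacle I anticipate is nothing technical but rather making sure the equality analysis is airtight: the key observation I will lean on is that $\sigma_1(G) = \sqrt{2}$ is forced by $\mu_1^+(G) = 2$, which in turn uses the assumption that $G$ has no isolated vertices so that $\mathcal{Q}(G)$ is well defined and $R(G)$ has greatest eigenvalue $1$.
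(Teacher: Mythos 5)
Your proof of the inequality is the same as the paper's: both rest on the elementary fact that $\bigl(\sum_i \sigma_i\bigr)^2 \geq \sum_i \sigma_i^2$ for nonnegative $\sigma_i$, combined with the trace identity $\sum_i \sigma_i^2(G) = n$. Where you diverge is the equality analysis, and your route is a genuine (and arguably cleaner) alternative. The paper argues via rank: equality forces at most one nonzero singular value, hence $\mathrm{rank}(I_R(G)) = 1$, and a rank-one Randi\'c incidence matrix forces every component to be a single edge, after which a multi-component graph is ruled out because it would have rank greater than one. You instead pin down the \emph{value} of the one surviving singular value: since $G$ has no isolated vertices it has at least one edge, so $\rho_1(G) = 1$, $\mu_1^+(G) = 2$, and $\sigma_1(G) = \sqrt{2}$; feeding this back into $\sum_i \sigma_i^2 = n$ gives $n = 2$ at once, and $K_2$ is the only candidate on two vertices. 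Your version avoids the structural argument about rank-one incidence matrices entirely and replaces it with one line of arithmetic, at the cost of invoking the spectral fact $\mu_1^+ = 2$ (which the paper establishes in the introduction anyway and uses freely elsewhere, e.g.\ in the proof of the upper bound). Both arguments are correct; yours is a bit more self-contained given the tools already set up in Section 1.
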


\begin{proof} It is obvious that $\sum_{i=1}^n \sigma_i \geq \sqrt{\sum_{i=1}^n \sigma_i^2}$
and the equality holds if and only if at most one of the $\sigma_i$
is non-zero. From (\ref{3}), we know that $\sum_{i=1}^n
\sigma_i^2(G)=n$.  Therefore

$$I_RE(G) =\sum_{i=1}^n \sigma_i \geq \sqrt{\sum_{i=1}^n \sigma_i^2}=\sqrt{n}.$$

The equality holds if and only if at most one of the $\sigma_i$ is
non-zero, that is, the rank of $(I_R(G)I_R(G)^T)$ is 1. This is
equivalent to $rank(I_R(G))=1$ since
$rank(I_R(G)I_R(G)^T)=rank(I_R(G))$. Therefore, each component of
$G$ must have exactly one edge, i.e., each component is isomorphic
to $K_2$. If the graph $G$ has more than one component, clearly,
$rank(I_R(G))>1$, a contradiction, and hence $G\cong K_2$.
\end{proof}

\noindent{\bf Remark 2.1:} Note that in \cite{MJ} there is a similar
lower bound for incidence energy similar to (\ref{4}), that is,
$IE(G)\geq\sqrt{2m}$, the equality holds if and only if $m \leq 1$.

\begin{theo}
Let $G$ be a graph of order $n$ ($n\geq2$), and contains no isolated
vertices. Then
\begin{equation} \label{5}
 I_RE(G)\leq \sqrt{2}+\sqrt{(n-1)(n-2)},
 \end{equation}
the equality holds if and only if $G\cong K_n$.

\end{theo}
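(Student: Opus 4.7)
The plan is to combine the exact value of the largest singular value with a Cauchy--Schwarz estimate on the remaining ones, and then pin down the equality case by a rank-one analysis of $\mathcal{Q}(G)$.

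For the bound itself: since $G$ has no isolated vertices and $n\geq 2$, $G$ has at least one edge, hence $\rho_1(G)=1$ and $\mu_1^+(G)=2$, so $\sigma_1(G)=\sqrt{2}$. Then (\ref{3}) gives $\sum_{i=2}^n \sigma_i^2 = n-2$, and Cauchy--Schwarz applied to $(\sigma_2,\ldots,\sigma_n)$ against the all-ones vector in $\mathbb{R}^{n-1}$ yields $\sum_{i=2}^n \sigma_i \leq \sqrt{(n-1)(n-2)}$. Adding $\sigma_1=\sqrt{2}$ gives (\ref{5}).

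For the equality case: Cauchy--Schwarz equality forces $\sigma_2=\cdots=\sigma_n = \sqrt{(n-2)/(n-1)}$, so $\mathcal{Q}(G)$ has only two distinct eigenvalues, $2$ and $\tfrac{n-2}{n-1}$, with multiplicities $1$ and $n-1$. Since each connected component of $G$ contributes an eigenvalue $2$ to $\mathcal{Q}(G)$ (each component has at least one edge), the graph $G$ must be connected. Now $M := \mathcal{Q}(G) - \tfrac{n-2}{n-1}I_n$ is positive semidefinite of rank $1$, so $M = \tfrac{n}{n-1}\, xx^T$ where $x = \tfrac{1}{\sqrt{2m}}(\sqrt{d_1},\ldots,\sqrt{d_n})^T$ is the unit Perron eigenvector of $\mathcal{Q}(G)$. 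Fix $i\neq j$: if $v_iv_j\notin E(G)$ then $M_{ij}=0$, whereas $\tfrac{n}{n-1}x_i x_j = \tfrac{n\sqrt{d_id_j}}{2m(n-1)}>0$ by the absence of isolated vertices, a contradiction. Hence every pair of vertices is adjacent, i.e.\ $G\cong K_n$. A direct spectral computation for $K_n$ (whose normalized signless Laplacian has eigenvalues $2$ and $\tfrac{n-2}{n-1}$ with multiplicities $1$ and $n-1$) confirms that equality is indeed attained.

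The main obstacle is the equality characterization: the analytic part only forces $\mathcal{Q}(G)$ to have a very short spectrum, which is a strong spectral condition but not yet a combinatorial one. The rank-one identity above converts that spectral information into a pointwise identity between the adjacency pattern of $G$ and the matrix $xx^T$ built from the degrees, and that identity collapses as soon as any non-edge is tested.
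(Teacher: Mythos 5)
Your proof is correct. The derivation of the bound is essentially identical to the paper's: extract $\sigma_1=\sqrt{2}$ from $\mu_1^+=2$, note $\sum_{i=2}^n\sigma_i^2=n-2$, and apply Cauchy--Schwarz to the remaining $n-1$ singular values. Where you genuinely diverge is the equality case. The paper stops at ``$\rho_2=\cdots=\rho_n=-\frac{1}{n-1}$, and therefore $G\cong K_n$,'' leaving the passage from the Randi\'c spectrum to the combinatorial conclusion unjustified (implicitly leaning on a known spectral characterization of $K_n$). You instead make this step self-contained: since the eigenvalue $2$ of $\mathcal{Q}(G)$ has multiplicity one and its eigenvector is $D^{1/2}\mathbf{1}$ (normalized), the matrix $\mathcal{Q}(G)-\tfrac{n-2}{n-1}I_n$ equals $\tfrac{n}{n-1}xx^T$ with $x$ entrywise positive, so every off-diagonal entry of $\mathcal{Q}(G)$ is strictly positive and no non-edge can exist. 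This rank-one argument is clean and correct (the connectivity remark is not even needed, since $D^{1/2}\mathbf{1}$ is an eigenvector for $2$ whenever there are no isolated vertices, and all-pairs adjacency follows directly), and it buys a fully rigorous equality characterization that the paper's version only asserts. The final verification for $K_n$ matches the paper's.
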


\begin{proof}
By applying the Cauchy-Schwartz inequality we have that
 $$\sum_{i=2}^n \sqrt{\mu_i^+(G)}\leq  \sqrt{(n-1)\sum_{i=2}^n\mu_i^+(G)},$$
with  equality holds if and only if
$\mu_2^+(G)=\mu_3^+(G)=\ldots=\mu_n^+(G)=\frac{n-2}{n-1}=
1-\frac{1}{n-1}.$ Since $\sum_{i=1}^n \mu_i^+(G)=n$ and
$\mu_1^+(G)=2$ if $G$ has at least one edge, then
 \begin{eqnarray*}
 I_RE(G)&=& \sum_{i=1}^n \sqrt{\mu_i^+(G)}= \sqrt{2}+\sum_{i=2}^n \sqrt{\mu_i^+(G)}\\
 &\leq& \sqrt{2}+\sqrt{(n-1)(n-\mu_1^+(G)}=\sqrt{2}+\sqrt{(n-1)(n-2)}.
\end{eqnarray*}

The equality is attained if and only if
$\mu_2^+(G)=\mu_3^+(G)=\ldots=\mu_n^+(G)=1-\frac{1}{n-1}$. Then,
$\rho_2(G)=\rho_3(G)=\ldots=\rho_n(G)=-\frac{1}{n-1}$, and therefore
we have $G\cong K_n.$

Conversely, if $G\cong K_n$, we can easily check that $I_RE(G)=
\sqrt{2}+\sqrt{(n-1)(n-2)}$. \end{proof}

\noindent{\bf Remark 2.2:} In \cite{MJ} there is an upper bound for
incidence energy, that is, $IE(G)\leq\sqrt{2mn}$, the equality holds
if and only if $m=0$. That result is quite different from ours.

\section{ Randi\'c incidence energy of subgraphs}

At the beginning of this section, we review some concepts in matrix
theory.

Let $A$ and $B$ be complex matrices of order $r$ and $s$,
respectively $(r \geq s)$. We say the eigenvalues of $B$ interlace
the eigenvalues of $A$, if $\lambda_i(A)\geq \lambda_i(B)\geq
\lambda_{r-s+i}(A)$ for $i =1,\ldots , s$.

\begin{lemma} \cite[p.51]{Doob}
If $A$ and $B$ are real symmetric matrices of order $n$ and $C = A
+B$, then
\begin{eqnarray*}
 \lambda_{i+j+1} (C)&\leq &  \lambda_{i+1} (A)+ \lambda_{j-1} (B)\\
  \lambda_{n-i-j} (C)&\geq &  \lambda_{n-i} (A)+ \lambda_{n-j} (B)
\end{eqnarray*}
for $i, j = 0, \ldots , n$ and $i + j\leq n - 1$. In particular, for
all integer $i$ $(1 \leq i \leq n)$,
\begin{equation}\label{6}
\lambda_i(C) \geq \lambda_i(A) + \lambda_n(B).
\end{equation}

\end{lemma}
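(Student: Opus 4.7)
My plan is to obtain these as the classical Weyl inequalities for eigenvalues of sums of real symmetric matrices, via the Courant--Fischer minimax characterization combined with a dimension count on intersected eigenspaces. (For the statement to be well-defined at $j=0$ and to agree with the standard Weyl form, the right-hand side of the first inequality should read $\lambda_{j+1}(B)$ rather than $\lambda_{j-1}(B)$; I proceed under that reading.)

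First I would recall both forms of Courant--Fischer: for any real symmetric $n\times n$ matrix $M$,
\[
\lambda_k(M)=\max_{\dim S=k}\min_{x\in S,\,\|x\|=1}x^{T}Mx=\min_{\dim T=n-k+1}\max_{x\in T,\,\|x\|=1}x^{T}Mx.
\]
Fix orthonormal eigenbases $v_1,\ldots,v_n$ of $A$ and $w_1,\ldots,w_n$ of $B$, with eigenvalues ordered non-increasingly. For the upper bound, let $U=\mathrm{span}(v_{i+1},\ldots,v_n)$ and $W=\mathrm{span}(w_{j+1},\ldots,w_n)$, of dimensions $n-i$ and $n-j$. The standard identity $\dim(U\cap W)\geq \dim U+\dim W-n$ then gives $\dim(U\cap W)\geq n-i-j$. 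Any unit vector $x\in U$ satisfies $x^{T}Ax\leq\lambda_{i+1}(A)$, any unit vector $x\in W$ satisfies $x^{T}Bx\leq\lambda_{j+1}(B)$, so $x^{T}Cx\leq\lambda_{i+1}(A)+\lambda_{j+1}(B)$ on $U\cap W$. Picking an $(n-i-j)$-dimensional subspace $T\subseteq U\cap W$ and applying the min-max form with $k=i+j+1$ (so $n-k+1=n-i-j$) yields $\lambda_{i+j+1}(C)\leq\lambda_{i+1}(A)+\lambda_{j+1}(B)$.

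The lower bound is the dual argument: take $U'=\mathrm{span}(v_1,\ldots,v_{n-i})$ and $W'=\mathrm{span}(w_1,\ldots,w_{n-j})$, whose intersection again has dimension at least $n-i-j$; on $U'\cap W'$ every unit vector satisfies $x^{T}Ax\geq\lambda_{n-i}(A)$ and $x^{T}Bx\geq\lambda_{n-j}(B)$. The max-min form with $k=n-i-j$ then delivers $\lambda_{n-i-j}(C)\geq\lambda_{n-i}(A)+\lambda_{n-j}(B)$. The special case (\ref{6}) drops out by setting $j=0$ in this second bound and relabeling $n-i\mapsto i$; alternatively, $B-\lambda_n(B)I_n\succeq 0$, and since adding a positive semidefinite matrix cannot decrease any eigenvalue we have $\lambda_i(A+B)\geq\lambda_i(A+\lambda_n(B)I_n)=\lambda_i(A)+\lambda_n(B)$. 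The only slightly non-routine step in the whole argument is the dimension-counting inequality for $\dim(U\cap W)$, but that is a standard fact from linear algebra, so I do not foresee any serious obstacle.
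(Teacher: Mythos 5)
Your proof is correct. The paper itself offers no proof of this lemma --- it is quoted verbatim from Cvetkovi\'c--Doob--Sachs as a known result --- so there is no internal argument to compare against; what you have written is the standard textbook derivation of the Courant--Weyl inequalities from the two forms of Courant--Fischer together with the dimension bound $\dim(U\cap W)\geq \dim U+\dim W-n$, and every step checks out: the subspace dimensions $n-i$ and $n-j$ match the required $n-k+1=n-i-j$ for $k=i+j+1$ in the min-max form, and $k=n-i-j$ in the max-min form for the dual bound. You are also right that the first displayed inequality as printed in the paper contains a typo: $\lambda_{j-1}(B)$ should be $\lambda_{j+1}(B)$, both because $\lambda_{-1}$ is undefined at $j=0$ and because that is the form actually proved (and the form stated in the cited source). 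Your two derivations of the special case (\ref{6}) --- specializing $j=0$ in the second inequality and relabeling, or adding the positive semidefinite matrix $B-\lambda_n(B)I_n$ --- are both valid, and the second is exactly the monotonicity fact the paper actually uses later when it invokes (\ref{6}) with $B$ positive semidefinite.
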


\begin{theo}
Let $G$ be a graph and $E'$ be a nonempty subset of $E(G)$. Then
\begin{equation}\label{7}
I_RE(G)>I_RE(G-E').
\end{equation}

\end{theo}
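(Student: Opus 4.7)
My plan is to argue by induction on $|E'|$: the inductive step is immediate once we establish $I_RE(G)>I_RE(G-e)$ for every single edge $e\in E(G)$, so it suffices to handle the single-edge case. First I would dispose of the trivial situation in which $e=v_kv_\ell$ is an isolated $K_2$-component of $G$ (i.e.\ $d_k=d_\ell=1$): here $G-e$ merely deletes this component, and the Example in the Introduction gives $I_RE(G)-I_RE(G-e)=I_RE(K_2)=\sqrt{2}>0$.

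Assume from now on that at least one endpoint of $e=v_kv_\ell$ has degree $\geq 2$ in $G$. The natural strategy is to compare $I_R(G)$ and $I_R(G-e)$ through an intermediate matrix $M$, obtained from $I_R(G)$ by scaling the row of each endpoint $v_i\in\{v_k,v_\ell\}$ with $d_i\geq 2$ by the factor $\sqrt{d_i/(d_i-1)}$, and leaving the other rows unchanged. A direct calculation verifies that deleting from $M$ the column indexed by $e$ produces exactly $I_R(G-e)$ (with a zero row for any endpoint that becomes isolated). This reduces the problem to a two-step comparison: first a diagonal rescaling $I_R(G)\mapsto M$ by a matrix $\succeq I$, which weakly \emph{increases} every singular value; then a single-column deletion $M\mapsto I_R(G-e)$, which weakly \emph{decreases} every singular value by Cauchy interlacing.

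The main obstacle is that these two operations change $\|\cdot\|_*$ in opposite directions relative to $M$, so one must argue that the drop from the column deletion strictly dominates the gain from the rescaling. This is where Lemma~3.1 is expected to enter: applied to the decomposition $I_R(G)I_R(G)^T = M_2M_2^T + cc^T$, where $M_2$ is $I_R(G)$ with its $e$-column removed and $c$ is that column, it yields $\sigma_i(I_R(G))\geq\sigma_i(M_2)$ for all $i$, with strict inequality at at least one index thanks to the nontrivial rank-one positive-semidefinite perturbation $cc^T$. A careful accounting of this strict gain against the increase produced by the diagonal rescaling step should close the gap and deliver the strict inequality $I_RE(G)>I_RE(G-e)$; iterating then gives the full theorem.
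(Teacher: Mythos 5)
Your reduction to the single-edge case, your treatment of an isolated $K_2$-component, and your construction of the intermediate matrix $M$ are all sound; in particular you are right that deleting the $e$-column of $I_R(G)$ does \emph{not} produce $I_R(G-e)$, because the endpoint degrees drop, and that rescaling those rows by $\sqrt{d_i/(d_i-1)}$ is exactly what repairs this. The problem is that your argument stops precisely where the difficulty begins. Every inequality you actually establish points the same way relative to the auxiliary matrices: writing $M_2$ for $I_R(G)$ with the $e$-column removed, Lemma 3.1 gives $\sigma_i(I_R(G))\geq\sigma_i(M_2)$, but since $I_R(G-e)=D'M_2$ with $D'\succeq I_n$ you equally get $\sigma_i(I_R(G-e))\geq\sigma_i(M_2)$; likewise both $\sigma_i(I_R(G))$ and $\sigma_i(I_R(G-e))$ are dominated by $\sigma_i(M)$. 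Neither pair of bounds compares $I_RE(G)$ with $I_RE(G-e)$. The promised ``careful accounting'' of the column-deletion loss against the rescaling gain is therefore the entire content of the theorem, and nothing in the proposal quantifies either effect. Worse, the two effects cancel exactly at the level of second moments: when no endpoint becomes isolated, $tr(I_R(G)I_R(G)^T)=tr(I_R(G-e)I_R(G-e)^T)=n$, while $tr(MM^T)$ exceeds both by $\frac{1}{d(u)-1}+\frac{1}{d(v)-1}$, so the comparison of $\sum_i\sigma_i$ is genuinely delicate and cannot be settled by trace bookkeeping. This is a gap, not a finishing detail.

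For comparison, the paper's own proof is far more direct: it partitions $I_R(G)=\left(I_R(H)\ \ I_R(G-E')\right)$ with $E(H)=E'$, deduces $I_R(G)I_R(G)^T=I_R(H)I_R(H)^T+I_R(G-E')I_R(G-E')^T$, and applies Lemma 3.1 to conclude $\lambda_i(I_R(G)I_R(G)^T)\geq\lambda_i(I_R(G-E')I_R(G-E')^T)$ for every $i$, extracting strictness from the trace. You should note, however, that this partition treats all vertex degrees as unchanged by the deletion --- exactly the point your matrix $M$ was built to address --- and the per-index eigenvalue inequality it asserts actually fails in general: for $G=K_4$ and $E'$ a single edge one computes $\lambda_2(I_R(K_4)I_R(K_4)^T)=\mu_2^+(K_4)=\frac{2}{3}$ while $\mu_2^+(K_4-e)=1$. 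So your instinct about where the real obstacle lies is correct and your skepticism of the naive partition is justified, but identifying the obstacle is not the same as overcoming it; as written, your argument does not prove the theorem.
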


\begin{proof}
Let $H$ be the spanning subgraph of $G$ such that $E(H)=E'$. The
Randi\'c incidence matrix of $G$ can be partitioned as
$I_R(G)=\left(
\begin{array}{cc}
I_R(H) & I_R(G-E') \\
\end{array}
 \right),$ and so $$I_R(G)I_R(G)^T=I_R(H)I_R(H)^T+I_R(G-E')I_R(G-E')^T.$$

Since $I_R(H)I_R(H)^T$ is positive semi-definite, by Eq.(\ref{6}),
$\lambda_i(I_R(G)I_R(G)^T)\geq \lambda_i(I_R(G-E')I_R(G-E')^T)$ for
$i=1,2,\ldots,n.$  It follows that $I_RE(G)\geq I_RE(G-E').$

Moreover, $\lambda_i(I_R(G)I_R(G)^T)=
\lambda_i(I_R(G-E')I_R(G-E')^T)$ for all  $i$ if the equality holds.
Consequently, $tr(I_R(G)I_R(G)^T)=tr(I_R(G-E')I_R(G-E')^T)$, and it
implies that $tr(I_R(H)I_R(H)^T)=0$. Since $I_R(H)I_R(H)^T$ is
positive semi-definite, $\lambda_i(I_R(H)I_R(H)^T)=0$,
$i=1,2,\ldots,n$. $H$ must be an empty graph, a contradiction.
\end{proof}

\noindent{\bf Remark 3.1:} For incidence energy, in \cite{MJ} there
is an analogous theorem, that is, the incidence energy of a graph is
greater than that of its proper subgraphs. \vspace{3ex}

According to $I_RE(K_n)= \sqrt{2}+\sqrt{(n-1)(n-2)}$, the following corollary is
obvious.
\begin{coro}
Let $G$ be a non-empty graph with clique number $c$. Then $I_RE(G)\geq \sqrt{2}+\sqrt{(c-1)(c-2)}$. In particular, if $G$ has at least one edge then $I_RE(G)\geq \sqrt{2}$.
\end{coro}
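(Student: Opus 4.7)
The plan is to combine Theorem~3.2 with the explicit formula $I_RE(K_n)=\sqrt{2}+\sqrt{(n-1)(n-2)}$ established in Theorem~2.2, by deleting from $G$ every edge that does not lie inside a maximum clique. By the definition of the clique number there exists $S\subseteq V(G)$ with $|S|=c$ and $G[S]\cong K_c$. Set $E'=E(G)\setminus E(G[S])$, the set of edges of $G$ that either leave this clique or lie entirely outside it.

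First I would analyse the spanning subgraph $G-E'$. Any edge of $G$ incident to a vertex in $V(G)\setminus S$ cannot be contained in $G[S]$, so it must belong to $E'$; hence every vertex of $V(G)\setminus S$ is isolated in $G-E'$, and the non-isolated part of $G-E'$ is precisely the clique $K_c$ on $S$. Since isolated vertices do not contribute to the Randi\'c incidence energy---this is read off from the block form~(\ref{0}) together with~(\ref{2})---we obtain $I_RE(G-E')=I_RE(K_c)$.

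Next I would invoke Theorem~3.2 to conclude $I_RE(G)\ge I_RE(G-E')$; equality is possible only in the trivial case $E'=\emptyset$, where $G$ already consists of $K_c$ plus isolated vertices and $I_RE(G)=I_RE(K_c)$ holds directly by the same reason. Combining this with the value $I_RE(K_c)=\sqrt{2}+\sqrt{(c-1)(c-2)}$ from Theorem~2.2 gives the first assertion of the corollary. The ``in particular'' claim is then immediate: a graph with at least one edge has $c\ge 2$, so $(c-1)(c-2)\ge 0$ and the bound collapses to $I_RE(G)\ge\sqrt{2}$.

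There is no substantial obstacle in this argument; the corollary is essentially a two-step chain built from the two preceding theorems. The only subtlety worth flagging is that Theorem~3.2 must remain valid when the edge-deletion creates isolated vertices, but this is guaranteed by the block-diagonal structure of $I_R(G)I_R(G)^T$ displayed in~(\ref{0}), which ensures that the singular-value computation is unaffected by appending isolated vertices.
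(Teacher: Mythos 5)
Your proposal is correct and follows exactly the route the paper intends: the paper derives the corollary as an immediate consequence of Theorem 3.2 (monotonicity of $I_RE$ under edge deletion) together with the computed value $I_RE(K_c)=\sqrt{2}+\sqrt{(c-1)(c-2)}$ from Theorem 2.2. Your extra care about isolated vertices created by the deletion and about the degenerate case $E'=\emptyset$ only makes explicit what the paper leaves as ``obvious.''
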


When the edge subset $E'$ consists of exactly one edge, we have the following theorem.
\begin{theorem}\label{th11}
Let $G$ be a connected graph, $e=\{uv\}$ be an edge of $G$. Then
\begin{equation}\label{eq3}
I_RE(G) \geq\sqrt{\frac{1}{d(u)}+\frac{1}{d(v)}+[I_RE(G- e)]^2},
\end{equation}
where $d(u)$ and $d(v)$ denote the degree of $u$ and $v$, respectively. Moreover, the equality holds if and only if $G \cong K_2$.
\end{theorem}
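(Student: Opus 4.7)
The plan is to mirror the column decomposition from the proof of the preceding theorem and then sharpen it via a Chebyshev-type cross-term inequality. First, I would peel off the column of $I_R(G)$ indexed by $e$. Setting $\phi_e=d(u)^{-1/2}\mathbf{e}_u+d(v)^{-1/2}\mathbf{e}_v$ (so that $\|\phi_e\|^2=\frac{1}{d(u)}+\frac{1}{d(v)}$) and writing
$$I_R(G)=\bigl(I_R(G-e)\,\big|\,\phi_e\bigr),$$
one obtains the outer-product decomposition
$$I_R(G)I_R(G)^{T}=I_R(G-e)I_R(G-e)^{T}+\phi_e\phi_e^{T},$$
in which $\phi_e\phi_e^{T}$ is a rank-one positive semidefinite matrix of trace $\|\phi_e\|^2$.

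Next I would extract two spectral facts. Applying Weyl's inequality (\ref{6}) as in the preceding proof gives $\sigma_i(G)\geq\sigma_i(G-e)\geq 0$ for every $i$, while taking traces of the displayed identity gives
$$\sum_{i=1}^{n}\sigma_i(G)^2-\sum_{i=1}^{n}\sigma_i(G-e)^2=\operatorname{tr}(\phi_e\phi_e^{T})=\frac{1}{d(u)}+\frac{1}{d(v)}.$$
The main step is then the elementary cross-term inequality $(\sum_i x_i)(\sum_i y_i)\geq\sum_i x_iy_i$ valid for nonnegative sequences $\{x_i\},\{y_i\}$, since the omitted cross-terms $x_iy_j$ with $i\neq j$ are nonnegative. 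I would apply it with $x_i=\sigma_i(G)-\sigma_i(G-e)$ and $y_i=\sigma_i(G)+\sigma_i(G-e)$ to obtain
$$I_RE(G)^2-I_RE(G-e)^2=\Bigl(\sum_i x_i\Bigr)\Bigl(\sum_i y_i\Bigr)\geq\sum_i x_iy_i=\sum_i\bigl(\sigma_i(G)^2-\sigma_i(G-e)^2\bigr)=\frac{1}{d(u)}+\frac{1}{d(v)},$$
and extracting a square root after rearranging yields (\ref{eq3}).

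For the equality case, the cross-term inequality is tight exactly when $x_iy_j=0$ for every pair $i\neq j$. Since $G$ is connected and has an edge, $\sigma_1(G)^2=\mu_1^+(G)=2>0$, so $y_1>0$, which forces $x_i=0$ (that is, $\sigma_i(G)=\sigma_i(G-e)$) for all $i\geq 2$. The trace identity then concentrates the whole mass $\frac{1}{d(u)}+\frac{1}{d(v)}$ into $\sigma_1(G)^2-\sigma_1(G-e)^2$, so $x_1>0$, and the conditions $x_1y_j=0$ for $j\geq 2$ force $\sigma_j(G)=\sigma_j(G-e)=0$ for all $j\geq 2$. Combined with identity (\ref{3}) this yields $\sigma_1(G)^2=n$; together with $\sigma_1(G)^2=2$ this forces $n=2$, so $G\cong K_2$, and the converse is an immediate check. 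The step I expect to require the most care is precisely this equality analysis, where the Chebyshev equality conditions have to be translated into a rigid spectral constraint pinning down $G\cong K_2$ uniquely among connected graphs.
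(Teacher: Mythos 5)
Your proposal is correct and follows essentially the same route as the paper: the same column decomposition $I_R(G)=\bigl(I_R(G-e)\ \ \beta\bigr)$, the same trace identity, and the same interlacing step $\sigma_i(G)\geq\sigma_i(G-e)\geq 0$; your cross-term inequality $(\sum_i x_i)(\sum_i y_i)\geq\sum_i x_iy_i$ is, once expanded, exactly the paper's term-by-term comparison $\sum_{i<j}\sigma_i(G)\sigma_j(G)\geq\sum_{i<j}\sigma_i(G-e)\sigma_j(G-e)$. Your equality analysis reaches the same conclusion as the paper (only one nonzero singular value, which together with $\sigma_1(G)^2=\mu_1^+=2$ and $\sum_i\sigma_i(G)^2=n$ forces $n=2$ and $G\cong K_2$), and is if anything argued more carefully.
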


\begin{proof}
It is easy to see that the Randi\'c incidence matrix of $G$ can be
represented in the form of $$I_R(G)=(I_R(G- e)\quad \beta), $$ where
$\beta$ is a vector of size $n$ whose first two components are
$\frac{1}{\sqrt{d(u)}}$ and $\frac{1}{\sqrt{d(v)}}$, the other
components are 0. Therefore,
$$I_R(G)I_R(G)^T=I_R(G- e)I_R(G- e)^T+\left( {\begin{array}{*{20}{c}}
J&O\\
0&0
\end{array}} \right),$$ where $J=\left( {\begin{array}{*{20}{c}}
{\frac{1}{{d(u)}}}&{\frac{1}{{\sqrt {d(u)d(v)} }}}\\
{\frac{1}{{\sqrt {d(u)d(v)} }}}&{\frac{1}{{d(v)}}}
\end{array}} \right)$.
Hence,
\begin{equation}\label{eq4}
tr(I_R(G)I_R(G)^T) =\frac{1}{d(u)}+\frac{1}{d(v)}+tr(I_R(G- e)I_R(G- e)^T),
\end{equation}
By  (\ref{6}), we have $\sigma_i(G)\geq \sigma_i(G- e)$ for all $i=1,\cdots,n$.
\begin{eqnarray*}
[I_RE(G)]^2 &= & \sum\limits_i {{\sigma _i}^2(G)} + 2\sum\limits_{i < j} {{\sigma _i}(G){\sigma _j}(G)} \\
\null  &=&  tr(I_R(G)I_R(G)^T) +2\sum\limits_{i < j} {{\sigma _i}(G){\sigma_j}(G)}\\
\null  &=& \frac{1}{d(u)}+\frac{1}{d(v)}+tr(I_R(G- e)I_R(G- e)^T)+2\sum\limits_{i < j} {{\sigma _i}(G){\sigma_j}(G)}\qquad  ( by \; (\ref{eq4}))\\
\null  &=&\frac{1}{d(u)}+\frac{1}{d(v)}+\sum\limits_i {{\sigma _i}^2(G- e)}+2\sum\limits_{i < j} {{\sigma _i}(G){\sigma _j}(G)}\\
\null  &\geq&\frac{1}{d(u)}+\frac{1}{d(v)}+\sum\limits_i {{\sigma _i}^2(G- e)}+2\sum\limits_{i < j} {{\sigma _i}(G- e){\sigma _j}(G- e)}\\
\null  &=&\frac{1}{d(u)}+\frac{1}{d(v)}+[I_RE(G- e)]^2.
\end{eqnarray*}
If $I_R(G)$ has at least two non-zero singular values, since
$I_RE(G)>I_RE(G- e)$, then ${\sigma _k}(G)>{\sigma _k}(G- e)$ for
some $2\leq k\leq n$. Thus,
$$\sum\limits_{i < j} {{\sigma _i}(G){\sigma _j}(G)}
= {\sigma _1}(G){\sigma _k}(G) + \sum\limits_{i < j,(i,j) \ne (1,k)} {{\sigma _i}(G){\sigma _j}(G)}.$$
Since ${\sigma _1}(G){\sigma _k}(G)>0$, we have that
$$\sum\limits_{i < j} {{\sigma _i}(G){\sigma _j}(G)}>\sum\limits_{i < j} {{\sigma _i}(G- e){\sigma _j}(G- e)}.$$
Thus, if the Randi\'c incidence matrix of the graph $G$ has more
than one non-zero singular value, the equality in (\ref{eq3}) does
not occur. Since $rank(I_R(G)I_R(G)^T)= rank(I_R(G))$, then in the
equality case, $rank(I_R(G))$ must be equal to 1. On the other hand,
if the graph $G$ has more than one edge, $rank(I_R(G))>1$.
Therefore, the equality in (\ref{eq3}) holds if and only if $G =
K_2$.
\end{proof}

\noindent{\bf Remak 3.2:} Here we point out that if we set
$d(u)=d(v)=1$ in (\ref{eq3}), it is just the relation between
$IE(G)$ and $IE(G-e)$ given in \cite{MJ}.

\section{Upper bound for bipartite graphs}
\begin{theorem}\label{thbi}
Let $G$ be a bipartite graph of order $n$ without isolated vertices. Then
\begin{equation}\label{eqbi}
I_RE(G) \leq n-2+\sqrt{2},
\end{equation}
the equality holds if and only if $G$ is a complete bipartite graph.
\end{theorem}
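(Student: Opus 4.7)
The plan is to use the spectral identity $I_RE(G)=\sum_{i=1}^n\sqrt{\mu_i^+(G)}$ from (\ref{2}) together with the constraints $\sum_i\mu_i^+=n$ and $\mu_1^+=2$ (the latter since $G$ has an edge). The crucial extra structural fact for bipartite $G$ is that the normalized signless Laplacian spectrum is symmetric about $1$: if $X,Y$ is the bipartition and $S$ is the diagonal $\pm1$ matrix assigning $+1$ on $X$ and $-1$ on $Y$, then $SR(G)S^{-1}=-R(G)$, so the spectrum of $R(G)$ is symmetric about $0$ and hence the spectrum of $\mathcal{Q}(G)=I_n+R(G)$ is symmetric about $1$. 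In particular, $\mu_1^+=2$ forces $\mu_n^+=0$, leaving $\sum_{i=2}^{n-1}\mu_i^+=n-2$.

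Next I would apply Cauchy--Schwarz to the ``middle'' eigenvalues:
\begin{equation*}
\sum_{i=2}^{n-1}\sqrt{\mu_i^+(G)}\ \le\ \sqrt{(n-2)\sum_{i=2}^{n-1}\mu_i^+(G)}\ =\ \sqrt{(n-2)(n-2)}\ =\ n-2,
\end{equation*}
which together with $\sqrt{\mu_1^+}+\sqrt{\mu_n^+}=\sqrt{2}$ gives $I_RE(G)\le n-2+\sqrt{2}$. Conversely, plugging in $G=K_{a,b}$ (whose normalized Laplacian spectrum is $\{0,1^{(n-2)},2\}$, i.e.\ $\mathcal{Q}(K_{a,b})$ has spectrum $\{2,1^{(n-2)},0\}$) gives equality.

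For the equality discussion, Cauchy--Schwarz equality forces $\mu_2^+=\cdots=\mu_{n-1}^+=1$, hence $R(G)$ has spectrum $\{1,0^{(n-2)},-1\}$ and in particular $\mathrm{rank}(R(G))=2$. Using the bipartite block form $R(G)=\bigl(\begin{smallmatrix}0&B\\ B^{T}&0\end{smallmatrix}\bigr)$, where $B_{ij}=(d_id_j)^{-1/2}$ if $x_i\sim y_j$ and $0$ otherwise, rank $2$ translates to $\mathrm{rank}(B)=1$, so $B=uv^{T}$ for some vectors $u,v$. Since $G$ has no isolated vertices, every row and every column of $B$ is nonzero, forcing every entry of $u$ and every entry of $v$ to be nonzero, and thus every entry of $B$ is nonzero, i.e.\ $G$ is complete bipartite.

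Finally, I would address the possibility that $G$ is disconnected. If $G$ has $k\ge 1$ components $G_1,\dots,G_k$ (each bipartite and edge-bearing, since no isolated vertices), then $\mathcal{Q}(G)$ has eigenvalue $2$ and eigenvalue $0$ each with multiplicity $k$, and repeating the Cauchy--Schwarz step on the remaining $n-2k$ eigenvalues (which sum to $n-2k$) yields $I_RE(G)\le k\sqrt{2}+(n-2k)=n-2+\sqrt{2}+(k-1)(\sqrt{2}-2)$. Since $\sqrt{2}-2<0$, the bound $n-2+\sqrt{2}$ still holds, with strict inequality once $k\ge 2$; equality therefore forces $k=1$, and combined with the previous paragraph, exactly the connected complete bipartite graphs achieve equality. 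The main obstacle I anticipate is the equality case: the clean deduction from $\mathrm{rank}(B)=1$ and the no-isolated-vertex hypothesis to complete bipartiteness is the heart of the argument, and correctly ruling out disconnected extremal configurations requires the small bookkeeping inequality $(k-1)(\sqrt{2}-2)\le 0$.
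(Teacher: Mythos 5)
Your proof is correct, and the core inequality is obtained exactly as in the paper: establish $\mu_1^+=2$ and $\mu_n^+=0$, then apply Cauchy--Schwarz to the middle eigenvalues $\mu_2^+,\ldots,\mu_{n-1}^+$, which sum to $n-2$. You reach $\mu_n^+=0$ via the signature similarity $SR(G)S^{-1}=-R(G)$ (spectral symmetry of bipartite graphs), whereas the paper uses the classical fact $q_n=0$ for bipartite signless Laplacians and transfers it through $\mathcal{Q}=D^{-1/2}QD^{-1/2}$; these are equivalent in substance. The genuine divergence is in the equality analysis. The paper observes that equality forces $\rho_1=1$ to be the only positive Randi\'c eigenvalue and then invokes Theorem 2.4 of the cited Gutman--Furtula--Bozkurt paper (such graphs are complete multipartite), specializing to complete bipartite. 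You instead argue directly: the forced spectrum $\{1,0^{(n-2)},-1\}$ gives $\mathrm{rank}(R(G))=2$, hence $\mathrm{rank}(B)=1$ in the bipartite block form, and the no-isolated-vertex hypothesis turns the rank-one factorization $B=uv^T$ into the statement that every entry of $B$ is nonzero, i.e.\ $G$ is complete bipartite. This is self-contained and more elementary than the paper's appeal to an external classification theorem, at the cost of a little linear algebra. Your final paragraph on disconnected graphs is sound but redundant: the Cauchy--Schwarz step already covers disconnected $G$ (it only uses $\sum_{i=2}^{n-1}\mu_i^+=n-2$), and in the equality case your rank-one argument forces completeness, hence connectedness, automatically (equivalently, $k\ge 2$ components would give eigenvalue $2$ multiplicity $k$, contradicting $\mu_2^+=1$).
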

\begin{proof}
Let $Q(G)$ be the signless Laplacian matrix of $G$, and $q_1\geq
q_2\geq\ldots\geq q_n$ be the signless Laplacian spectrum of $G$. It
is well known that if $G$ is bipartite, $q_n=0$. Since
$\mathcal{Q}(G)=D(G)^{-\frac{1}{2}}Q(G)D(G)^{-\frac{1}{2}}$, we get
that $\mu_n^+=0$. Hence, $\sum\limits_{i = 1}^{n - 1} \mu_i^+ =n$.
Since $G$ has at least one edge, $\mu_1^+=2$. By Cauchy-Schwarz
inequality, we have
$$\sum\limits_{i = 2}^{n - 1} \sqrt{\mu_i^+}\leq\sqrt{(n-2)(n-\mu_1^+})=\sqrt{(n-2)(n-2)}=n-2.$$
So, we have
$$I_RE(G) =\sqrt{\mu_1^+}+\sum\limits_{i = 2}^{n - 1} \sqrt{\mu_i^+}\leq n-2+\sqrt{2},$$
the equality holds if
$\mu_2^+=\cdots=\mu_{n-1}^+=\frac{n-2}{n-2}=1$, and $\mu_1^+=2$,
$\mu_n^+=0$. This implies that the Randi\'c eigenvalues $\rho_i$ of
$G$ satisfies $\rho_2=\cdots=\rho_{n-1}=0$, $\rho_1=1$ and
$\rho_n=-1$, i.e., $\rho_1=1$ is the only positive Randi\'c
eigenvalue of $G$. From Theorem 2.4 in \cite{Gutman}, $G$ is a
complete multipartite graph. Since $G$ is a bipartite graph, we
derive that $G$ is a complete bipartite graph. Conversely, let
$G=(X,Y)$ be a complete bipartite graph with two vertex classes $X$
and $Y$, where $|X|=x$, $|Y|=y$ and $x+y=n$. As is known in
\cite{Doob}, the adjacency eigenvalues of $G$ are $\sqrt{xy}$,
$-\sqrt{xy}$, 0 ($n-2$ times). It is easy to get that
$\mathcal{Q}(G)=I_n+\frac{1}{\sqrt{xy}}A(G)$, where $A(G)$ is the
adjacency matrix of $G$. So, we have $\mu_1^+=2$, $\mu_n^+=0$ and
$\mu_2^+=\cdots=\mu_{n-1}^+=1$. Thus, $I_RE(G)= n-2+\sqrt{2}$.
\end{proof}

Because trees are bipartite graphs, we characterized the unique tree
with maximum  Randi\'c incidence energy.
\begin{theorem}\label{corbi}
Among all trees with $n$ vertices, the star $S_n$ is the unique
graph with maximum Randi\'c incidence energy.
\end{theorem}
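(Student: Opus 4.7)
The plan is to apply Theorem \ref{thbi} directly, since every tree on $n \geq 2$ vertices is a connected bipartite graph and therefore has no isolated vertices. This immediately gives $I_RE(T) \leq n-2+\sqrt{2}$ for any $n$-vertex tree $T$, and from the explicit computation in the introductory example we already know $I_RE(S_n)=n-2+\sqrt{2}$, so $S_n$ attains the bound.

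The only nontrivial point is uniqueness. By the equality clause of Theorem \ref{thbi}, if $I_RE(T) = n-2+\sqrt{2}$ for a tree $T$, then $T$ must be a complete bipartite graph $K_{x,y}$ with $x+y=n$. I would then use the tree condition to pin down $x$ and $y$: a complete bipartite graph $K_{x,y}$ has $xy$ edges, while a tree on $n$ vertices has $n-1 = x+y-1$ edges. The equation $xy = x+y-1$ rewrites as $(x-1)(y-1)=0$, forcing $x=1$ or $y=1$, which means $T \cong S_n$.

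I don't foresee a serious obstacle; the argument is essentially an application of the previously established bipartite bound plus the elementary edge-count observation characterizing which complete bipartite graphs are trees. The one subtlety to flag is the trivial case $n=1$ (a single vertex is an isolated vertex and Theorem \ref{thbi} does not apply); this can be dismissed at the outset, since the statement is only interesting for $n\geq 2$ and in that range every tree satisfies the hypotheses of Theorem \ref{thbi}.
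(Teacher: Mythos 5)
Your proof is correct, and it reaches the conclusion by a slightly different route than the paper. You lean entirely on Theorem \ref{thbi}: the bound $I_RE(T)\leq n-2+\sqrt{2}$ holds for every tree, the star attains it, and the equality clause forces any maximizer to be a complete bipartite graph $K_{x,y}$, which you then rule out for $x,y\geq 2$ by the edge count $xy=x+y-1\Leftrightarrow(x-1)(y-1)=0$. The paper instead splits into cases: if $T$ is not complete bipartite it is a proper spanning subgraph of some $K_{s,t}$, so the strict monotonicity of $I_RE$ under edge deletion (Theorem 3.2) gives $I_RE(T)<I_RE(K_{s,t})=n-2+\sqrt{2}=I_RE(K_{1,n-1})$; if $T$ is complete bipartite, then $x\geq 2$ and $y\geq 2$ would create a cycle, so $T\cong S_n$. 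Your version has the advantage of invoking only one prior result (the bipartite upper bound with its equality characterization) rather than two, though it does place full weight on the ``only if'' direction of that equality clause, which in the paper ultimately rests on the classification of graphs with a single positive Randi\'c eigenvalue; the paper's detour through subgraph monotonicity avoids that dependence for the non-complete-bipartite case. Your handling of the trivial $n=1$ case and the cycle-free characterization of complete bipartite trees (edge count versus the paper's cycle argument) are equivalent and both fine.
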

\begin{proof}
Let $G$ be a tree with $n$ vertices. Suppose that $G$ is not a
complete bipartite graph. Then $G$ is a spanning subgraph of some
complete bipartite graph $K_{s,t}$, where $s+t=n$. So,
$I_RE(G)<I_RE(K_{s,t})$. From Theorem \ref{thbi}, we know that
$I_RE(K_{1,n-1})=I_RE(K_{s,t})=n-2+\sqrt{2}$. So,
$I_RE(G)<I_RE(K_{1,n-1})$.

If $G$ is a complete bipartite graph $K_{x,y}$, then one of $x$ and
$y$ must be equal to 1; otherwise, there exists a cycle in $G$, a
contradiction. So $G$ must be the star $K_{1,n-1}$. Hence, the star
$K_{1,n-1}$ is the unique graph with maximum Randi\'c incidence
energy among all $n$-vertex trees.
\end{proof}

\noindent{\bf Remak 4.1:} The same problem has been studied for
incidence energy in \cite{Gutman1}, where the authors proved that
for any $n$-vertex tree $T$, $IE(S_n) \leq IE(T) \leq IE(P_n)$,
where $P_n$ denotes the path on $n$ vertices. But, our result says
that $I_RE(T) \leq I_RE(S_n)$. However, we do not know if $I_RE(P_n)
\leq I_RE(T)$ holds. For Randi\'c index, it was showed in
\cite{CGHP,Yu} that among trees with $n$ vertices, the star $S_n$
has the minimum Randi\'c index and the path $P_n$ attains the
maximum Randi\'c index.\\

To end this paper, we point out that one can generalize the concepts
Randi\'c incidence matrix and Randi\'c incidence energy. Similar to
those in \cite{GHL}, define the general Randi\'c incidence matrix of
a graph $G$ as an $n\times m$ matrix whose $(i, j)$-entry is
$(d_i)^{\alpha}$ if $v_i$ is incident to $e_j$ and $0$ otherwise,
where $\alpha\neq 0$ is a fixed real number. Also, define the
general Randi\'c incidence energy as the sum of the singular values
of the general Randi\'c incidence matrix of a graph $G$. It could be
interesting to further study these generalized concepts and get some
unexpected results.

\end{document}